\documentclass[12pt]{amsart}
\setlength{\evensidemargin}{0.0in}
\setlength{\oddsidemargin}{0.0in}
\setlength{\textwidth}{6.5in}
\setlength{\topmargin}{0.0in}
\setlength{\textheight}{8.5in}
\setlength{\headheight}{0in}

\usepackage{hyperref}
\usepackage{physics}

%\numberwithin{equation}{section}
%This numbers everything by section
\newtheorem{theorem}{Theorem}

\newtheorem{proposition}[theorem]{Proposition}

\newtheorem{question}[theorem]{Question}
\newtheorem{lemma}[theorem]{Lemma}

\theoremstyle{definition}
\newtheorem{example}[theorem]{Example}

\begin{document}

\begin{abstract}Gilmer \cite{Gilmer} has recently shown that in any nonempty union-closed family $\mathcal F$ of subsets of a finite set, there exists an element contained in at least a proportion $.01$ of the sets of $\mathcal F$. We improve the proportion from $.01$ to $\frac{ 3 -\sqrt{5}}{2} \approx .382$ in this result. An improvement to $\frac{1}{2}$ would be the Frankl union-closed set conjecture. We follow Gilmer's method, replacing one key estimate by a sharp estimate. We then suggest a new addition to this method and sketch a proof that it can obtain a constant strictly greater than $\frac{ 3 -\sqrt{5}}{2} $. We also disprove a conjecture of Gilmer that would have implied the union-closed set conjecture. \end{abstract}

\author{Will Sawin}

\title{ An improved lower bound for the union-closed sets conjecture} 

\maketitle

We prove the following weak form of the Frankl union-closed conjecture.

\begin{theorem}\label{uc-progress} Let $\mathcal F$ be a nonempty union-closed family of subsets of $[n]$. Then there exists $i\in [n]$ contained in a proportion at least $\frac{3- \sqrt{5}}{2}$ of the sets in $\mathcal F$. \end{theorem}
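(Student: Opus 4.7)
The plan is to follow Gilmer's entropy-compression framework, replacing his binary-entropy inequality by a sharp form. Assume for contradiction that every $i \in [n]$ lies in a proportion strictly less than $c := (3 - \sqrt{5})/2$ of the members of $\mathcal{F}$. Sample $A, B$ independently and uniformly from $\mathcal{F}$ and set $U := A \cup B$; since $\mathcal{F}$ is union-closed, $U \in \mathcal{F}$ almost surely and hence $H(U) \leq \log |\mathcal{F}| = H(A)$. Fix an order on $[n]$ and expand both entropies via the chain rule. Writing $f_i(a) := \Pr[A_i = 1 \mid A_{<i} = a]$, and using that $U_{<i}$ is a function of $(A_{<i}, B_{<i})$ while $A_i, B_i$ are conditionally independent given these prefixes, one obtains
$$H(U_i \mid U_{<i}) \;\geq\; H(U_i \mid A_{<i}, B_{<i}) \;=\; E_{a, b}\bigl[h\bigl(1 - (1 - f_i(a))(1 - f_i(b))\bigr)\bigr],$$
where $h$ is the binary entropy and $a, b$ are independent draws from the law of $A_{<i}$, while $H(A_i \mid A_{<i}) = E_a[h(f_i(a))]$.

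Summing over coordinates, the argument reduces to the coordinate-wise sharp estimate: for any i.i.d.\ random variables $X, Y \in [0, 1]$ with $E[X] \leq c$,
\begin{equation}\label{sharp}
E\bigl[h\bigl(1 - (1 - X)(1 - Y)\bigr)\bigr] \;\geq\; E[h(X)].
\end{equation}
The threshold $c$ is singled out by the identity $(1 - c)^2 = c$, equivalently $c^2 - 3c + 1 = 0$, which makes the pointwise inequality $h(1 - (1-p)(1-q)) \geq (h(p) + h(q))/2$ tight at $p = q = c$: both sides equal $h(c)$. I would first check this pointwise form on $(p, q) \in [0, c]^2$ by elementary calculus, using the symmetry $h(t) = h(1-t)$ together with the fact that $1 - (1-p)(1-q) \leq 1 - c$ there.

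The main obstacle is that the conditional probabilities $f_i(a)$ need not lie in $[0, c]$ even when their mean $p_i$ does, so a pointwise bound on $[0, c]^2$ alone does not suffice; one genuinely needs the integrated inequality \eqref{sharp} with $X, Y$ free to take values in all of $[0, 1]$. The critical feature of \eqref{sharp} is that it holds with equality whenever $X \in \{0, 1\}$ almost surely, ruling out the most naive tangent-line minorant at $x = c$. I would instead seek a separable pointwise lower bound $2 h(1 - (1-x)(1-y)) - h(x) - h(y) \geq \psi(x) + \psi(y)$ on $[0, 1]^2$ in which $\psi$ vanishes at the three critical points $0$, $c$, and $1$, and is chosen so that $E[\psi(X)] \geq 0$ for every distribution of $X$ with mean at most $c$; equivalently one can first reduce to extremal distributions (which ought to be two-point) and verify \eqref{sharp} directly in that case. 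Once \eqref{sharp} is established, summing over $i$ and using the strict hypothesis $p_i < c$ together with the nontriviality of $\mathcal{F}$ yields $H(U) > H(A)$, contradicting $H(U) \leq H(A)$.
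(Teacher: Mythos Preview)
Your reduction to the coordinatewise inequality \eqref{sharp} is exactly the paper's framework, and you correctly isolate the difficulty: the conditional probabilities $f_i(a)$ may range over all of $[0,1]$, so a pointwise bound on $[0,c]^2$ is not enough. The gap is in your two proposed routes to \eqref{sharp}.

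The separable minorant cannot exist. Your requirement that $E[\psi(X)]\ge 0$ for every law with mean $\le c$, together with $\psi(0)=0$, forces $\psi\ge 0$ on all of $[0,1]$: for $x\le c$ take $X=\delta_x$, and for $x>c$ take $X$ supported on $\{0,x\}$ with mean $c$, giving $(c/x)\psi(x)\ge 0$. But then the pointwise inequality $2h(1-(1-x)(1-y))-h(x)-h(y)\ge \psi(x)+\psi(y)\ge 0$ would have to hold on $[0,1]^2$, and it fails already at $x=y=\tfrac12$, where the left side equals $2h(\tfrac34)-2h(\tfrac12)<0$. So no such $\psi$ exists, and more generally no pointwise separable lower bound on the full square can yield \eqref{sharp}.

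The alternative ``reduce to two-point extremals'' is the correct direction, but it is precisely where the work lies, and ``ought to be two-point'' is not an argument: the functional $\mu\mapsto E_{\mu\times\mu}[h(p+q-pq)]-E_\mu[h(p)]$ is \emph{quadratic} in $\mu$, so standard extreme-point reasoning for linear functionals does not apply. The paper handles this with a genuine variational step: if $\mu$ is a minimizer, then $\mu$ also minimizes the \emph{linearized} functional $\nu\mapsto 2E_{\mu\times\nu}[h(p+q-pq)]-E_\nu[h(q)]$, i.e.\ the expectation of $F_\mu(q):=2E_{p\sim\mu}[h(p+q-pq)]-h(q)$. One then computes that $q(1-q)F_\mu''(q)$ is strictly decreasing in $q$, so $F_\mu$ is convex on some $[0,a]$ and concave on $[a,1]$; this forces the minimizer $\mu$ to be supported on $\{v,1\}$ for a single $v\in[0,a]$, not merely on two arbitrary points. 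Finally, checking \eqref{sharp} for such measures reduces to showing that $H(2v-v^2)/\bigl((1-v)H(v)\bigr)$ is minimized over $(0,1)$ at $v=c$, which is itself a nontrivial one-variable lemma. None of these three ingredients --- the linearization, the convex/concave splitting of $F_\mu$, or the monotonicity of the one-variable ratio --- appears in your outline, and each is essential.

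A minor point: proving \eqref{sharp} only at the threshold $E[X]\le c$ with a weak inequality does not by itself give $H(U)>H(A)$; you need either the stronger ratio $H(2u-u^2)/H(u)>1$ for some $u<c$ bounding all the $p_i$, or an explicit identification of the equality cases in \eqref{sharp}.
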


Theorem \ref{uc-progress} follows from Theorem \ref{entropy-sharp}, which itself follows from Lemma \ref{inductive-sharp}.  For a random variable $A$ valued in sets, we let $H(A)$ be the entropy of $A$. For $ p \in [0,1]$, we let $H(p) =-p \log p - (1-p) \log (1-p)$ be the Shannon entropy, i.e. the entropy of a Bernoulli random variable with parameter $p$. (We never use $H$ to denote the entropy of a real-valued random variable, so there will be no ambiguity.)

\begin{theorem}\label{entropy-sharp} Let $u \in [0,1]$. Let $A$ and $B$ denote independent samples from a distribution over subsets of $[n]$. Assume that, for all $i\in [n]$, $ \operatorname{Pr} [ i \in A] \leq u$. Then

\[ H(A \cup B) \geq H(A) \cdot  \begin{cases} \frac{ H ( 2u -u^2)}{H(u)} & \textrm{ if } u\leq \frac{3 -\sqrt{5}}{2} \\  (1-u) \frac{2}{\sqrt{5}-1 } & \textrm { if } u \geq  \frac{3 -\sqrt{5}}{2} \end{cases}.\]

\end{theorem}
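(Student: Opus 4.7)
The plan is to reduce Theorem \ref{entropy-sharp} to the per-coordinate inductive inequality of Lemma \ref{inductive-sharp} via the chain rule for entropy. Write
\[ H(A\cup B) = \sum_{i=1}^n H((A\cup B)_i \mid (A\cup B)_{<i}), \qquad H(A) = \sum_{i=1}^n H(A_i \mid A_{<i}). \]
Since $A$ and $B$ are independent, $A_i$ is independent of $B_{<i}$ given $A_{<i}$, so $H(A_i \mid A_{<i}) = H(A_i \mid A_{<i}, B_{<i})$. Because $(A\cup B)_{<i}$ is a function of $(A_{<i}, B_{<i})$, coarsening the conditioning can only increase entropy, giving
\[ H(A) \leq \sum_{i=1}^n H(A_i \mid (A\cup B)_{<i}). \]
It therefore suffices to prove, in a suitably averaged sense, a per-coordinate inequality of the form $H((A\cup B)_i \mid (A\cup B)_{<i}) \geq c(u) \cdot H(A_i \mid (A\cup B)_{<i})$, where $c(u)$ denotes the piecewise factor in the theorem.

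To analyze a single coordinate, condition on $(A\cup B)_{<i} = z$ and set $q_z = \operatorname{Pr}[i \in A \mid z]$ (equal by exchangeability to the analogous quantity for $B$) and $r_z = \operatorname{Pr}[i \in A\cup B \mid z]$. Then $H(A_i \mid z) = H(q_z)$ and $H((A\cup B)_i \mid z) = H(r_z)$. The joint distribution of $(A_i, B_i) \mid z$ is exchangeable but typically correlated, since conditioning on an event that depends on both $A$ and $B$ breaks unconditional independence; a key technical step is to bound $r_z$ in terms of $q_z$ using the residual structure inherited from $A \perp B$. The hypothesis $\operatorname{Pr}[i \in A] \leq u$ becomes the averaged constraint $\operatorname{E}_z[q_z] \leq u$. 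I expect Lemma \ref{inductive-sharp} to encode precisely the sharp comparison $\operatorname{E}_z[H(r_z)] \geq c(u)\, \operatorname{E}_z[H(q_z)]$ subject to this constraint.

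The main obstacle is the piecewise form of $c(u)$, which reflects two distinct extremal regimes. The pointwise i.i.d.\ ratio $H(2p - p^2)/H(p)$ is decreasing from $2$ at $p = 0$ through $1$ at the critical value $p^* = (3-\sqrt{5})/2$, where $2p^* - (p^*)^2 = 1 - p^*$ (so $H$ takes the same value at both arguments). For $u \leq p^*$, the ratio is bounded below by its value at $p = u$, yielding the first case $H(2u - u^2)/H(u)$ directly. For $u > p^*$, the ratio at $p = u$ is already below $1$, and one must instead invoke a tangent-line inequality of roughly the shape $H(2p - p^2) \geq \tfrac{2}{\sqrt{5}-1}(1 - p)\, H(p)$, valid on the relevant interval and tight at $p = p^*$; averaging this against the constraint on $\operatorname{E}_z[q_z]$ produces the linear envelope $(1-u)\cdot \tfrac{2}{\sqrt{5}-1}$. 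Pinning down the sharp slope $\tfrac{2}{\sqrt{5}-1}$, chosen so that the two cases match at $u = p^*$, is the technical heart of the inductive lemma; once it is in hand, the chain-rule setup above assembles the per-coordinate bounds into the theorem.
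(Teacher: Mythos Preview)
Your reduction has a genuine gap. You condition the $i$th coordinate on $(A\cup B)_{<i}$, and you correctly observe that this destroys the independence of $A_i$ and $B_i$: given $(A\cup B)_{<i}=z$, the pair $(A_i,B_i)$ is only exchangeable, not a product. You then hope that Lemma~\ref{inductive-sharp} ``encodes precisely'' the comparison $\mathbb{E}_z[H(r_z)]\geq c(u)\,\mathbb{E}_z[H(q_z)]$, but it does not. Lemma~\ref{inductive-sharp} is a statement about \emph{i.i.d.} $[0,1]$-valued $p,q$ and the specific functional $H(p+q-pq)$; it gives no control over $H(r_z)$ when $r_z$ is the union probability of a correlated exchangeable pair. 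Indeed, under positive correlation $r_z$ can fall strictly below $2q_z-q_z^2$, and there is no residual ``structure inherited from $A\perp B$'' strong enough to recover the bound at the level of a single $z$. The step you flag as ``a key technical step'' is not carried out and, as stated, cannot be.

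The paper's proof applies the data processing inequality on the \emph{other} side: it keeps $H(A_i\mid A_{<i})$ as is and instead lower-bounds
\[
H\bigl((A\cup B)_{<i+1}\mid (A\cup B)_{<i}\bigr)\ \geq\ H\bigl((A\cup B)_{<i+1}\mid A_{<i},B_{<i}\bigr).
\]
Conditioning on the finer $\sigma$-algebra $(A_{<i},B_{<i})$ restores conditional independence of $A_i$ and $B_i$, so with $p_i=\Pr[i\in A\mid A_{<i}]$ and $q_i=\Pr[i\in B\mid B_{<i}]$ one has exactly $\Pr[i\in A\cup B\mid A_{<i},B_{<i}]=p_i+q_i-p_iq_i$ with $p_i,q_i$ i.i.d.\ and $\mathbb{E}[p_i]\leq u$. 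This is precisely the hypothesis of Lemma~\ref{inductive-sharp}, and the induction closes. Your detour through $H(A_i\mid (A\cup B)_{<i})$ is unnecessary; once you refine the conditioning on the union side you land back in the paper's argument. Finally, your last paragraph treats the second case of $c(u)$ as a pointwise tangent-line inequality, but the actual content of Lemma~\ref{inductive-sharp} is distributional: for $u>\tfrac{3-\sqrt{5}}{2}$ the extremizer is the two-point measure supported on $\{\tfrac{3-\sqrt{5}}{2},1\}$, not the Dirac mass at $u$, and that is what produces the linear factor $(1-u)\tfrac{2}{\sqrt{5}-1}$.
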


\begin{lemma}\label{inductive-sharp} Let $u \in [0,1]$. Let  $p, q$ be  i.i.d. $[0,1]$-valued random variables with expectation $\leq u$. Then 
\[ \mathbb E [  H( p + q -  p q ) ] \geq   \mathbb E[ H(p) ] \cdot \begin{cases} \frac{ H ( 2u -u^2)}{H(u)} & \textrm{ if } u\leq \frac{3 -\sqrt{5}}{2} \\  (1-u) \frac{2}{\sqrt{5}-1 } & \textrm { if } u \geq  \frac{3 -\sqrt{5}}{2} \end{cases}   \]\end{lemma}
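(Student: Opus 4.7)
The plan is to prove the lemma by reducing it to a sharp pointwise inequality, after which the constant $C(u)$ emerges from a convex-envelope computation. Define
\[ C_1(v) := \frac{H(2v - v^2)}{H(v)}, \]
extended by continuity so that $C_1(0) = 2$ and $C_1(1) = 0$. The central ingredient I would establish is the symmetric pointwise bound
\[ H(p + q - pq) \;\geq\; \frac{C_1(p)\, H(q) + C_1(q)\, H(p)}{2} \]
for all $p, q \in [0,1]$, which holds with equality on the diagonal $p = q$ by the definition of $C_1$. Taking expectations under two i.i.d.\ samples and using independence immediately yields
\[ \mathbb{E}[H(p+q-pq)] \;\geq\; \mathbb{E}[C_1(p)] \cdot \mathbb{E}[H(p)]. \]

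To convert $\mathbb{E}[C_1(p)]$ into $C(u)$, I would compute the convex envelope $\widehat{C_1}$ of $C_1$ on $[0,1]$. The required analytic inputs are: (a) $C_1$ is convex on $[0, u^*]$; (b) the linear function $\ell(v) := (1-v)\cdot 2/(\sqrt{5}-1)$ satisfies $\ell \leq C_1$ on $[0,1]$, with equality exactly at $v = u^*$ (equivalently, $v \mapsto C_1(v)/(1-v)$ is minimized on $[0,1]$ at $v = u^*$ with value $2/(\sqrt{5}-1)$); and (c) $C_1'(u^*) = -2/(\sqrt{5}-1)$, so $\ell$ is tangent to $C_1$ at $u^*$. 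These imply that $\widehat{C_1}$ coincides with $C_1$ on $[0, u^*]$ and with $\ell$ on $[u^*, 1]$, so $\widehat{C_1}$ equals the function $C$ in the lemma statement. Since $\widehat{C_1}$ is convex and non-increasing, Jensen's inequality and $\mathbb{E}[p] \leq u$ yield
\[ \mathbb{E}[C_1(p)] \;\geq\; \mathbb{E}[\widehat{C_1}(p)] \;\geq\; \widehat{C_1}(\mathbb{E}[p]) \;\geq\; \widehat{C_1}(u) = C(u), \]
and combining this with the integrated pointwise inequality proves the lemma.

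The main obstacle is establishing the pointwise inequality itself. Via $r = 1-p$, $s = 1-q$ and the identities $H(p+q-pq) = H(rs)$, $H(p) = H(r)$, it rewrites as
\[ 2\,H(rs)\,H(r)\,H(s) \;\geq\; H(r^2)\,H(s)^2 + H(s^2)\,H(r)^2, \qquad r, s \in [0,1]. \]
I would attempt a direct calculus proof: fix $s$, let $F_s(r)$ denote the difference between the two sides, and verify $F_s(s) = 0$ and $F_s'(s) = 0$ (which is automatic from the definition of $C_1$); then examine the boundary values $F_s(0)$, $F_s(1)$ and control the remaining interior critical points of $F_s$ on $[0,1]$ to conclude $F_s \geq 0$. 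A potentially cleaner route would be to identify a structural property of the binary entropy, such as log-concavity of $t \mapsto H(e^{-t})$ on $(0,\infty)$, from which the inequality could be derived by an arithmetic manipulation. Establishing a pointwise bound tight enough to recover the exact constant $C(u)$ is where the specific form of $H$ enters and is expected to require the bulk of the technical effort.
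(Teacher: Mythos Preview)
Your approach is genuinely different from the paper's. The paper never attempts a pointwise inequality; it argues variationally. A first-variation argument shows that any minimizer $\mu$ of $\mathbb{E}_{\mu\times\mu}[H(p+q-pq)]-\lambda\,\mathbb{E}_\mu[H(p)]$ must also minimize the linearized functional $\nu\mapsto\mathbb{E}_\nu\bigl[2\,\mathbb{E}_\mu[H(p+q-pq)]-\lambda H(q)\bigr]$. For the integrand $F_\mu(q)=2\,\mathbb{E}_\mu[H(p+q-pq)]-\lambda H(q)$ one computes $\frac{d}{dq}\bigl(q(1-q)F_\mu''(q)\bigr)<0$, so $F_\mu$ is convex on an initial interval and concave thereafter; this forces the optimal $\mu$ to be a two-point measure supported on $\{v,1\}$. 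The two-point case is then dispatched by showing that $\frac{H(2v-v^2)}{(1-v)H(v)}$ has its unique minimum at $v=u^*$. Your ingredients (b) and (c) are exactly this last fact, so the two proofs share that endgame; the difference is entirely in how one reduces to it. Your route is closer in spirit to Alweiss--Huang--Sellke and Chase--Lovett, though the specific symmetric inequality $2H(rs)H(r)H(s)\geq H(r^2)H(s)^2+H(s^2)H(r)^2$ is sharper than what those papers use.

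Two concrete gaps remain in your plan. First, the log-concavity route goes the wrong way: with $h(t)=H(e^{-t})$, log-concavity gives $h(a+b)\geq\sqrt{h(2a)h(2b)}$ and hence $\frac{2h(a+b)}{h(a)h(b)}\geq 2\sqrt{\frac{h(2a)}{h(a)^2}\cdot\frac{h(2b)}{h(b)^2}}$, but you need the \emph{larger} side $\frac{h(2a)}{h(a)^2}+\frac{h(2b)}{h(b)^2}$ of AM--GM, so log-concavity of $h$ alone cannot close the argument. Second, your claim (a) that $C_1$ is convex on $[0,u^*]$ is not a routine input: if it failed at some $u<u^*$ then $\widehat{C_1}(u)<C_1(u)$ and your Jensen step would deliver only the strictly weaker bound $\widehat{C_1}(u)\,\mathbb{E}[H(p)]$, not the sharp constant $C_1(u)$ that the lemma asserts (and which is attained by the constant distribution $p\equiv u$). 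This convexity statement is of the same analytic difficulty as your item (b) and deserves its own proof. The paper's variational reduction sidesteps both of these issues at the price of a less elementary, compactness-based argument.
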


All three of these represent quantitative improvements of corresponding results of Gilmer \cite[Theorem 1, Theorem 2, and Lemma 1]{Gilmer}, who in particular proved a lower bound of $.01$ for the maximum proportion of the sets in a union-closed family $\mathcal F$ containing an element. The work of Gilmer itself improved on work of Knill \cite{Knill} and W\'ojick \cite{Wojick} who proved lower bounds on the proportion comparable to $\frac{1}{ \log \abs{\mathcal F}}$.

 The deduction of Theorem \ref{uc-progress} from Theorem \ref{entropy-sharp}, and Theorem \ref{entropy-sharp} from Lemma \ref{inductive-sharp}, is identical to the one by Gilmer, and relies on taking $A$ and $B$ independent samples from the uniform distribution on $\mathcal F$ and estimating the entropy by induction on the restrictions of $A$ and $B$ to $[i]$ for $i$ from $1$ to $n$. The innovation is entirely in the proof of Lemma \ref{inductive-sharp}.

 Furthermore, Lemma \ref{inductive-sharp} and Theorem \ref{entropy-sharp} are completely sharp -- there are examples meeting the inequality for any particular value of $u\in (0,1)$. Indeed:

\begin{example}\label{sharp-1} Theorem \ref{entropy-sharp} is sharp for $u\leq  \frac{3-\sqrt{5}}{2}$ because of the example, due to Gilmer, where the events $i\in A$ for $i \in [n]$ are independent of probability $u$. In this case, $A$ has entropy $n H(u)$ and the events $i\in A \cup B$ are independent of probability $2u-u^2$ so $A \cup B$ has entropy $H( 2u-u^2)$.  \end{example}

\begin{example}\label{sharp-2} Theorem \ref{entropy-sharp} is sharp for  $u \geq  \frac{3-\sqrt{5}}{2}$ because of the example where, with probability $ (1-u) \frac{2}{\sqrt{5}-1} $, we choose each element $i\in [n]$ to lie in $A$ independently with probability $\frac{ 3- \sqrt{5}}{2}$ and with probability $ 1- (1-u) \frac{2}{\sqrt{5}-1} $, we choose  $A=[n]$. Then $A$ has entropy $ (1-u) \frac{2}{\sqrt{5}-1} n  H(\frac{3-\sqrt{5}}{2})+O(1)$ because a convex combination of two probability distributions has entropy that differs from the convex combination of their entropies by $\leq \log 2$. Furthermore, with probability $ \left((1-u) \frac{2}{\sqrt{5}-1} \right)^2$,  each element $i\in [n]$ lies in $A\cup B$ independently with probability $\frac{\sqrt{5}-1}{2} $ and with probability $ 1- \left( (1-u) \frac{2}{\sqrt{5}-1} \right)^2$, we have $A \cup B=[n]$. Thus $A \cup B$ has entropy  $ \left( (1-u) \frac{2}{\sqrt{5}-1}\right)^2 n  H(\frac{\sqrt{5}-1}{2})+O(1)$ for the same reason. Dividing, and ignoring the lower-order $O(1)$ term, we see that Theorem \ref{inductive-sharp} is sharp since $H(\frac{3-\sqrt{5}}{2}) =H(\frac{\sqrt{5}-1}{2})$. This gives a negative answer to a question of Gilmer \cite[first bulleted question on p.9]{Gilmer} for $u >  \frac{3-\sqrt{5}}{2}$, while Theorem \ref{entropy-sharp} gives a positive answer to that question for $u \leq   \frac{3-\sqrt{5}}{2}$. \end{example}
 
 The sharpness of Lemma \ref{inductive-sharp} arises from, for $u \leq  \frac{3-\sqrt{5}}{2}$, a random variable equal to $u$ with probability $1$, or, if  $u \geq  \frac{3-\sqrt{5}}{2}$, a random variable equal to $ \frac{3-\sqrt{5}}{2}$ with probability $(1-u) \frac{2}{\sqrt{5}-1} $ and equal to $1$ with probability $1- (1-u) \frac{2}{\sqrt{5}-1} $. %The similarity with Examples \ref{sharp-1} and \ref{sharp-2} is not a coincidence. More generally, for any random variable $p$ on $[0,1]$, we can form a random set $A$ by first sampling $p$ and then making the events $i\in A$ independent, with probability $p$, conditionally on $p$. For this random variable
 
 Furthermore, we demonstrate that, contrary to \cite[Conjecture 1 on p. 9]{Gilmer}, incorporating the KL divergence does not improve the estimate:
 
\begin{proposition}\label{counterexample} For any $u <  1$ and $ d > \frac{ H ( 2u -u^2)}{H(u)} $, for all sufficiently large $n$, there exist a random variable $A$ valued in subsets of $[n]$, containing each element with probability $\leq u$, such that $H(A \cup B)  \leq d H(A)$, but $D ( A \cup B|| A) =O(1)$ while $H(A)$ grows linearly in $n$. \end{proposition}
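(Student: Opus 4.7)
The plan is to take $A$ to be a geometric mixture of i.i.d.\ Bernoulli distributions at cleverly chosen rates, exploiting the fact that the family of such distributions is closed under the operation of taking the union of two independent samples, with mixture weights that convolve. Fix a small parameter $\lambda \in (0,1)$, set $u_0 := u(1-\lambda)/(1-u\lambda) \in (0,u)$, and for $k \geq 1$ let $a_k := 1-(1-u_0)^k$; write $\mathbb{P}_k$ for the law of an i.i.d.\ Bernoulli$(a_k)$ subset of $[n]$. Sample $A$ by first drawing $K$ from the geometric distribution $p_k := (1-\lambda)\lambda^{k-1}$ on $\{1,2,\ldots\}$ and then sampling $A \sim \mathbb{P}_K$. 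Direct summation gives $\sum_k p_k a_k = u$, so each element lies in $A$ with probability exactly $u$. Since $1-(1-a_j)(1-a_k) = 1-(1-u_0)^{j+k} = a_{j+k}$, the union of independent samples from $\mathbb{P}_j$ and $\mathbb{P}_k$ has law $\mathbb{P}_{j+k}$; hence $A \cup B$ is distributed as $\sum_m (p*p)_m \mathbb{P}_m$, where $p*p$ is the self-convolution.

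Applying the standard sandwich $n\sum_k p_k H(a_k) \leq H(A) \leq n\sum_k p_k H(a_k) + H(p)$ (the lower bound by concavity of entropy, the upper bound by introducing the mixture label $K$) and the analogous inequality for $A \cup B$, one sees that $H(A) \geq n(1-\lambda)H(u_0)$ grows linearly in $n$, and
\[
\frac{H(A \cup B)}{H(A)} \leq \frac{\sum_m (p*p)_m H(a_m)}{\sum_k p_k H(a_k)} + O_\lambda(1/n).
\]
As $\lambda \to 0^+$, $u_0 \to u$, while $p$ and $p*p$ concentrate at $1$ and $2$ respectively, so the right-hand side tends to $H(a_2)/H(a_1) = H(2u-u^2)/H(u)$. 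Since $d$ strictly exceeds this limit, one first fixes $\lambda > 0$ small enough (depending only on $u$ and $d$) and then takes $n$ large to obtain $H(A \cup B) \leq d \cdot H(A)$.

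For the KL divergence, the trivial bound $\mu_A(S) \geq p_M \mathbb{P}_M(S)$, where $M := J+K$ is the random label governing $A \cup B$, gives
\[
\mathbb{E}_{A \cup B}[-\log \mu_A(S)] \leq H(p*p) + D(p*p || p) + \sum_m (p*p)_m H(\mathbb{P}_m),
\]
while concavity of entropy yields $H(A \cup B) \geq \sum_m (p*p)_m H(\mathbb{P}_m)$. Subtracting,
\[
D(A \cup B || A) \leq H(p*p) + D(p*p || p),
\]
a finite constant depending only on $\lambda$, and in particular $O(1)$ as $n \to \infty$. A direct computation gives $(p*p)_k = (k-1)(1-\lambda)^2 \lambda^{k-2}$ for $k \geq 2$, so $(p*p)_k/p_k = (k-1)(1-\lambda)/\lambda$, making $D(p*p || p)$ manifestly finite.

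The main difficulty is choreographing the three asymptotic estimates so that a single $\lambda = \lambda(u,d)$ delivers all of them simultaneously for large $n$; no individual step is hard. Conceptually, the construction is a minimal modification of Gilmer's sharp Example \ref{sharp-1}: the Bernoulli rate is itself randomized via geometric weights, precisely so as to render $p \mapsto p*p$ tame in KL divergence while preserving the sharp entropy ratio in the limit $\lambda \to 0$.
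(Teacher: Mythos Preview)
Your construction is correct and essentially identical to the paper's: both take a geometric mixture of i.i.d.\ Bernoulli distributions at rates $1-(1-u_0)^k$, exploit closure under union (which turns the mixture index into a convolution), sandwich the entropies of $A$ and $A\cup B$ between $n\sum p_k H(a_k)$ and $n\sum p_k H(a_k)+H(p)$, and bound $D(A\cup B\|A)$ via $\mu_A(S)\ge p_m\mathbb{P}_m(S)$, arriving at the same $H(p*p)+D(p*p\|p)$ bound. The only cosmetic difference is that you tune $u_0=u(1-\lambda)/(1-u\lambda)$ so the marginal hits $u$ exactly, whereas the paper fixes $\bar u<u$ and then takes the geometric parameter small enough to land below $u$.
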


The proof of Lemma \ref{inductive-sharp} uses calculus of variations to replace the quadratic form $ \mathbb E [  H( p + q -  p q ) ]$ with a bilinear form. The key idea is that, while we cannot use Jensen's inequality since the relevant functions like $H( p+q-pq) - \lambda H(p)$ are not convex, we can show that a relevant function is convex on one region and concave on another region, and there are also strong (but very different) inequalities for the expectation of a concave function.

Finally, we sketch a proof of Theorem \ref{uc-progress} with the lower bound replaced by $\frac{3- \sqrt{5}}{2}+\delta$ for some $\delta>0$. The idea is to consider, in addition to $A$ and $B$ independent uniform samples for $\mathcal F$, also $A$ and $C$ uniform but correlated samples from $\mathcal F$. We choose the correlation greedily to maximize the entropy increase at each step in the inductive argument. This gives a gain in entropy compared to the independent samples but also a loss as the correlations in previous steps can cause problems in future steps. However, for distributions close to the optimum in Lemma \ref{inductive-sharp}, the gain is greater than the loss, so we gain overall by considering a suitable linear combination of the two entropies.
 
 The author was supported by NSF grant DMS-2101491 while working on the paper and would like to thank Ryan Alweiss, Stijn Cambie, Zachary Chase, and Lucas Gerardo Hern\'andez Ch\'avez for helpful comments on earlier versions of this manuscript. I would especially like to thank Bhavik Mehta for pointing out an error in the original proof of Lemma \ref{increase-decrease} and Ravi Bopanna for pointing me to the techniques for a corrected proof based on his proof in \cite{Bopanna} of a special case sufficient for Theorem \ref{uc-progress}.
 
 The same day this article first appeared on arXiv, two independent proofs \cite{AHS,CS} of Theorem \ref{uc-progress} also did. Since \cite{CS} depended on \cite{AHS} for a key lemma, and the proof of a similar lemma in the first version of this article contained an error, only \cite{AHS} gave a completely independent proof. The next day, an independent construction \cite{Ellis} of a counterexample similar to Proposition \ref{counterexample} (for $n=2$ instead of for fixed $n$) appeared.

\section{Proofs}

\begin{proof}[Proof of Theorem \ref{uc-progress} using Theorem \ref{entropy-sharp} ] The proof is identical to \cite[p. 2]{Gilmer}, but we repeat it here for convenience.

Let $A$ and $B$ be independent uniform samples from $\mathcal F$.  Assuming for contradiction that there does not exist $i\in [n]$ contained in a proportion at least $\frac{3- \sqrt{5}}{2}$ of the sets in $\mathcal F$, there must be $u < \frac{3 -\sqrt{5}}{2}$ for which each element of $[n]$ is contained in a proportion $\leq u$ of sets in $\mathcal F$ and thus for which $ \operatorname{Pr} [ i \in A] \leq u$ for all $i \in n$. We then have
\[ H(A \cup B) \geq H(A) \cdot  \frac{H ( 2u -u^2)}{H(u)}  > H(A) \] which contradicts the fact that the uniform distribution on $\mathcal F$ is the maximum-entropy random variable supported on $\mathcal F$. \end{proof}

Let $\lambda =  \begin{cases} \frac{ H ( 2u -u^2)}{H(u)} & \textrm{ if } u\leq \frac{3 -\sqrt{5}}{2} \\  (1-u) \frac{2}{\sqrt{5}-1 } & \textrm { if } u \geq  \frac{3 -\sqrt{5}}{2} \end{cases} $.

%The proof of Theorem \ref{entropy-sharp} using Lemma \ref{inductive-sharp}] is identical to \cite[Proof of Theorem 1 on p. 4]{Gilmer}.

\begin{proof} [Proof of Theorem \ref{entropy-sharp} using Lemma \ref{inductive-sharp}] The proof is identical to \cite[Proof of Theorem 1 on p. 4]{Gilmer}, but we repeat it here for convenience.

 Let  $A_{<i}$ be the intersection of $A$ with $[i-1]$, and similarly for $B_{<i}$ and $(A \cup B)_{<i} = A_{<i} \cup B_{<i}$. We prove
 \begin{equation}\label{inductive-inequality} H((A \cup B)_{<i} ) \geq H(A_{<i}) \cdot  \lambda \end{equation}
 by induction on $i$. The case $i= n+1$ will prove the theorem. The base case $i=1$ is trivial as both sides are $0$.  For the induction step, by the chain rule for entropy, we have \[ H((A \cup B)_{_{<i+1} } )  =  H (  (A \cup B)_{< (i+1) }  | (A \cup B)_{<i} ) +  H ( (A \cup B)_{<i} ) \]  and \[ H(A _{_{<i+1} } )  =  H (  A _{< (i+1) }  | A _{<i} ) +  H ( A_{<i} ) \] so assuming \eqref{inductive-inequality} for $i$, to obtain \eqref{inductive-inequality} for $i+1$ it suffices to check
 \begin{equation}\label{induction-step} H   ((A \cup B)_{< (i+1) }  | (A \cup B)_{<i} ) \geq \lambda H   (A _{< (i+1) }  | A _{<i} ).\end{equation}
 
 We have \[H (  (A \cup B)_{< (i+1) }  | (A \cup B)_{<i} )   \geq H ( (A \cup B)_{<(i+1)} | A_{<i}, B_{<i})\] by the data processing inequality. Let $p_i = \operatorname{Pr} [i \in A | A_{<i}]$ and $q_i = \operatorname{Pr} [i \in B| B_{<i}]$ be the conditional probabilities. Then because $A$ and $B$ are independent and identically distributed, $p_i$ and $q_i$ are independent, identically distributed random variables with expectation $\leq u$.  Furthermore, the probability that $i \in (A \cup B)_{< (i+1) } $ conditional on $A_{<i}$ and $B_{<i}$ is $p_i + q_i -p_i q_i$.  Since $i \in A$ is the only information contained in $A_{<(i+1)}$ but not $A_{<i}$, we have $H (  A _{< (i+1) }  | A _{<i} ) = \mathbb E[H (p_i)] $ and similarly $H ( (A \cup B)_{<(i+1)} | A_{<i}, B_{<i}) =  \mathbb E[ H( p_i+q_i- p_i q_i)]$. \eqref{induction-step} then follows from Lemma \ref{inductive-sharp}.

 \end{proof}

\begin{proof} [Proof of Lemma \ref{inductive-sharp}] 
An equivalent statement is that the minimum value of 
\begin{equation}\label{first-minimum}  \mathbb E_{(p,q) \sim \mu \times \mu}  [  H( p + q -  pq ) ] - \lambda \mathbb E_{p \sim \mu} [ H(p)] \end{equation} among all probability measures $\mu$ on $[0,1]$ with expectation $\leq u$ is nonnegative.

The minimum value is attained by a measure since every sequence of probability measures $\mu$ on $[0,1]$ has a weakly convergent subsequence and the functions $H(p+q-pq)$, $H(p)$, and $p$ are all continuous, so their expectations over weakly convergent sequences of measures converge.

Let us first check that, if $\mu$ an optimal measure, then $\mu$ also minimizes
\begin{equation}\label{second-minimum} 2 \mathbb E_{(p,q) \sim \mu \times \nu}  [  H( p + q -  p q ) ] - \lambda \mathbb E_{q \sim \nu} [ H(q)] = \mathbb E_{q\sim \nu} \left[ 2  \mathbb E_{p\sim \mu}   [  H( p + q -  p q ) ]  - \lambda H( q) \right] \end{equation}  among all probability measures $\nu$ on $[0,1]$ with expectation $\leq u$. Indeed, otherwise, taking $\mu' = (1-\epsilon) \mu + \epsilon \nu$
\[  \mathbb E_{(p,q) \sim \mu' \times \mu'}  [  H( p + q -  p q ) ] - \lambda \mathbb E_{p \sim \mu'} [ H(p) ]\] 
\[= (1-\epsilon)^2 \mathbb E_{(p,q) \sim \mu \times \mu}  [  H( p + q -  p q ) ]  + 2 \epsilon (1-\epsilon) \mathbb E_{(p,q) \sim \mu \times \nu}  [  H( p + q -  p q ) ] \] \[ + \epsilon^2\mathbb E_{(p,q)\sim \nu \times \nu}  [  H( p + q -  p q ) ] - \lambda (1-\epsilon) \mathbb E_{p\sim \mu} [ H(p) ] - \lambda \epsilon\mathbb E_{p \sim \mu} [ H(p) ] \] 
\[ = \mathbb E_{(p,q)\sim \mu \times \mu}  [  H( p + q -  p q ) ] - \lambda \mathbb E_{p\sim \mu }[ H(p)]  \] \[ + \epsilon \left(2 \mathbb E_{(p,q)\sim \mu \times \nu}  [  H( p + q -  p q ) ] - \lambda \mathbb E_{p\sim \nu }  [ H(p)]   -  2\mathbb E_{(p,q)\sim \mu \times \mu}  [  H( p + q -  p q ) ] +  \lambda \mathbb E_{p\sim \mu} [ H(p)] \right) + O(\epsilon^2) \]
which if the coefficient of $\epsilon$ is negative will be $<  \mathbb E_{(p,q)\sim \mu \times \mu}  [  H( p + q -  p q ) ] - \lambda \mathbb E_{p\sim \mu} [ H(p)] $ for small $\epsilon$, contradicting the minimality of $\mu$, and verifying that the minimal value of \eqref{second-minimum} is attained by $\nu=\mu$.

We now study the function $ F_\mu(q)=  2  \mathbb E_{p \sim \mu }  [  H( p + q -  pq  )   - \lambda H( q)]$. Specifically, we calculate $  \frac{d}{dq} q (1-q) \frac{d^2}{ d q^2}  F_\mu(q) $.

To do this, note that $H(q) = - q \log q - (1-q) \log (1-q)$, so $\frac{d}{dq} H(q) =  -\log q  + \log (1-q) $ and $\frac{d^2}{dq^2} H(q) =- \frac{1}{q}  - \frac{1}{1-q} =- \frac{1}{q(1-q)}$.  By the chain rule for a linear change of coordinates, $\frac{d^2}{dq^2} H(p+q-pq) = - \frac{ (1-p)^2}{ (p+q-pq) ( 1-p-q +pq) } =-\frac{ 1-p}{ (p+q-pq) (1-q)} $. Thus
\[ \frac{d^2}{ d q^2}  F_\mu(q)  = - 2 \mathbb E_{p \sim \mu } \left[ \frac{ 1-p}{ (p+q-pq) (1-q)} \right] +  \lambda \frac{1}{q(1-q)},\]
\[ q (1-q) \frac{d^2}{ d q^2}  F_\mu(q)  =  - 2 \mathbb E_{p \sim \mu } \left[ \frac{ (1-p)q}{ p+q-pq} \right] +  \lambda, \]
and therefore
\[ \frac{d}{dq} q (1-q) \frac{d^2}{ d q^2}  F_\mu(q)  = - 2\mathbb  E_{p \sim \mu } \left[ \frac{(1-p)  p }{ (p+q-pq)^2 } \right] <0 ,\] at least outside the degenerate case when $\mu$ is supported on $0$ and $1$ and \eqref{first-minimum} is zero.

It follows that $ q(1-q) \frac{d^2}{ d q^2}  F_\mu(q) $ is strictly decreasing. In particular, it either takes positive values on some interval $[0, a)$, zero value at $a$, and negative values on $(a,1]$, or is positive on all of $[0,1]$, or negative on all of $[0,1]$.

$ \frac{d^2}{ d q^2}  F_\mu(q) $ is positive or negative exactly where  $ q(1-q) \frac{d^2}{ d q^2}  F_\mu(q) $ is. Thus $F_\mu(q)$ is either strictly convex on some interval $[0,a)$ and strictly concave on $(a,1]$, convex on the whole interval, or concave on the whole interval.

If $\mu$ minimizes the expectation of $F_{\mu}(q)$ then $\mu$ must assign zero measure to the concave interval $(a,1)$, except its boundary points $\{a,1\}$, since otherwise we could push the mass to the boundary while preserving the expectation of $q$ and lower the expectation of $F_\mu(q)$. Similarly, $\mu$ restricted to the convex interval $[0,a]$ must be an atom since otherwise we could push the mass to the center  while preserving the expectation of $q$ and lower the expectation of $F_\mu(q)$. 

So $\mu$ is atomic, supported on the point $1$ and at most one other point. (In the convex or concave case, the reasoning is similar but simpler). It therefore suffices to show \eqref{first-minimum} is nonnegative for all measures $\mu$ of this form.

Let $\mu$ place mass $w$ on the point $v$ and mass $1-w$ on the point $1$. Then $\mathbb E_{p \sim \mu } [ H(p)] $ is  $w H(v)$ and $ \mathbb E_{(p,q) \sim \mu \times \mu}  [  H( p + q -  pq ) ] $ is $w^2 H( 2v-v^2)$, and thus \[  \mathbb E_{(p,q)\sim \mu \times \mu}  [  H( p + q-  pq ) ] - \lambda \mathbb E_{p \sim \mu } [ H(p)] \geq 0\] as long as \[ w^2 H(2v-v^2) \geq \lambda w H(v),\] i.e. as long as  \[ w \frac{ H(2v-v^2) }{H(v) } \geq \lambda.\]  Since the left side is increasing in $w$, this holds for all such measures with expectation $1- w (1-v) \leq u$ if and only if it holds for $w = \frac{1-u}{1-v}$, i.e. if and only if
\[  (1-u)  \min_{ v \in [0,u] } \left( \frac{ H(2v-v^2)}{ H(v) (1-v)} \right)  \geq \lambda .\]

By Lemma \ref{increase-decrease} below, $\frac{ H(2v-v^2)}{ H(v) (1-v)} $ is decreasing for $v < \frac{ 3-\sqrt{5}}{2} $ and increasing for $v > \frac{3 -\sqrt{5}}{2}$. Furthermore at $v= \frac{3 -\sqrt{5}}{2}$, we have  $\frac{ H(2v-v^2)}{ H(v)}=1 $ so $\frac{ H(2v-v^2)}{ H(v) (1-v)}= \frac{2}{\sqrt{5}-1}$. It follows that \[\min_{ v \in [0,u] } \left( \frac{ H(2v-v^2)}{ H(v) (1-v)} \right)   =  \begin{cases} \frac{ H ( 2u -u^2)}{H(u)(1-u) } & \textrm{ if } u\leq \frac{3 -\sqrt{5}}{2} \\  \frac{2}{\sqrt{5}-1 }& \textrm { if } u \geq  \frac{3 -\sqrt{5}}{2} \end{cases}   = \frac{\lambda}{(1-u)} , \] %and thus we may take $\lambda = \begin{cases} \frac{ H ( 2u -u^2)}{H(u)} & \textrm{ if } u\leq \frac{3 -\sqrt{5}}{2} \\  (1-u) \frac{2}{\sqrt{5}-1 } & \textrm { if } u \geq  \frac{3 -\sqrt{5}}{2} \end{cases}  $,
giving the claim. \end{proof}

\begin{lemma}\label{easier-entropy-inequality} For all $s\in (0,1)$, we have $H(s^2)< 2s H(s) $. \end{lemma}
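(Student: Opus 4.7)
The plan is to reduce the inequality, by a direct algebraic manipulation, to the classical fact that $x \log x$ is strictly convex on $(0,\infty)$.

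First I would expand using $\log(s^2) = 2 \log s$ and $1 - s^2 = (1-s)(1+s)$, so that $\log(1-s^2) = \log(1-s) + \log(1+s)$. The $-2s^2 \log s$ terms appearing in both $H(s^2)$ and $2s H(s)$ cancel, and after collecting what remains and pulling out a factor of $(1-s)$, the difference collapses to the identity
\[ H(s^2) - 2s H(s) = -(1-s)\bigl[(1-s)\log(1-s) + (1+s)\log(1+s)\bigr]. \]
Since $1 - s > 0$ on $(0,1)$, it suffices to show $(1-s)\log(1-s) + (1+s)\log(1+s) > 0$ for $s \in (0,1)$.

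This is precisely the assertion that the average of $f(x) = x\log x$ at the two points $x = 1-s$ and $x = 1+s$ exceeds its value at their midpoint $x = 1$, where $f(1) = 0$. Since $f$ is strictly convex on $(0,\infty)$ (its second derivative is $1/x > 0$) and $1-s \neq 1+s$ for $s > 0$, the strict form of Jensen's inequality gives exactly what is needed. Equivalently, one may set $g(s) := (1-s)\log(1-s) + (1+s)\log(1+s)$, observe $g(0) = 0$, and compute $g'(s) = \log\frac{1+s}{1-s} > 0$ on $(0,1)$, so that $g$ is strictly increasing and hence strictly positive on $(0,1)$.

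The only step that requires any care is the algebraic bookkeeping in the factorization above; once the displayed identity is in hand the conclusion is immediate, so I do not anticipate a genuine obstacle.
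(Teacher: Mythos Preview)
Your proof is correct. The algebraic identity
\[
H(s^2) - 2s H(s) = -(1-s)\bigl[(1-s)\log(1-s) + (1+s)\log(1+s)\bigr]
\]
checks out, and the bracket is positive on $(0,1)$ by the strict convexity of $x\log x$ (or by your derivative computation $g'(s)=\log\frac{1+s}{1-s}>0$).

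The paper takes a different route: rather than collapsing the difference to an exact factorization, it bounds the two terms of $H(s^2)$ separately. The $-s^2\log(s^2)$ term already matches the corresponding part of $2sH(s)$, and for the remaining term the paper uses the elementary inequality $2(1-s)>1-s^2$ together with a Taylor-series comparison $-s\log(1-s)>-\log(1-s^2)$ (obtained by comparing $s\sum_{k\ge1} s^k/k$ with $\sum_{k\ge1} s^{2k}/k$ term by term). Your approach is arguably cleaner: it produces an exact identity and then invokes a single standard convexity fact, whereas the paper's argument chains two ad hoc inequalities. On the other hand, the paper's method makes the slack in each piece visible, which can sometimes be useful if one later needs a quantitative refinement. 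Both arguments are short and elementary; yours just happens to be more structural.
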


\begin{proof}  We have $2 (1-s) > (1-s^2)$ since their difference is $(1-s)^2>0$ and we have \[-s \log (1-s) = s^2 + s^3/2 + s^4/3 + \dots > s^2 + s^4/2 + s^6/3+\dots =- \log (1-s^2) \] so 
\[ H(s^2) = -s^2 \log (s^2) - (1-s^2)\log(1-s^2) < -s^2 \log(s^2) - 2 (1-s) \log(1-s^2) \] \[ < -s^2 \log(s^2) -  2s(1-s) \log(1-s)  = 2 s H(s) .\]\end{proof}

\begin{lemma}\label{increase-decrease} The function $\frac{ H(2v-v^2)}{ H(v) (1-v)} $ from $(0,1)$ to $\mathbb R$ is decreasing for $v < \frac{ 3-\sqrt{5}}{2} $ and increasing for $v > \frac{3 -\sqrt{5}}{2}$. \end{lemma}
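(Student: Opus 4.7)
The plan is to substitute $s = 1-v$, which reformulates the claim as: the function $g(s) := H(s^2)/(sH(s))$ is strictly decreasing on $(0, s^*)$ and strictly increasing on $(s^*, 1)$, where $s^* := (\sqrt{5}-1)/2$ satisfies the golden-ratio identity $s^{*2} = 1-s^*$. This identity immediately gives $H(s^{*2}) = H(1-s^*) = H(s^*)$, so $g(s^*) = 1/s^* = 2/(\sqrt{5}-1)$, matching the expected critical value.

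Next, I would compute the sign of $g'$ using the identity $xH'(x) = H(x) + \log(1-x)$, a one-line consequence of $H'(x) = \log((1-x)/x)$ and the definition of $H$. Taking logarithms of $g$, differentiating, and applying this identity gives
\[
s(\log g)'(s) = \frac{2\log(1-s^2)}{H(s^2)} - \frac{\log(1-s)}{H(s)},
\]
so that $\operatorname{sign}(g'(s)) = \operatorname{sign}(\sigma(s))$, where
\[
\sigma(s) := 2H(s)\log(1-s^2) - H(s^2)\log(1-s) = [2H(s) - H(s^2)]\log(1-s) + 2H(s)\log(1+s).
\]
Since $\log(1-s^*) = 2\log s^*$ and $(1-s^*)(1+s^*)^2 = s^{*2}(1/s^*)^2 = 1$, one verifies $\sigma(s^*) = H(s^*)\log[(1-s^*)(1+s^*)^2] = 0$. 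Standard endpoint asymptotics ($H(s)\sim -s\log s$ near $0$ and $H(s)\sim -(1-s)\log(1-s)$ near $1$) show $\sigma(s)\to 0$ at both endpoints, from below as $s\to 0^+$ and from above as $s\to 1^-$.

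The main obstacle is to show that $\sigma$ has no zero in $(0,1)$ other than $s^*$, with $\sigma<0$ on $(0,s^*)$ and $\sigma>0$ on $(s^*,1)$. Because $\sigma$ vanishes at both endpoints in the limit, no direct monotonicity argument can work. My plan is to establish that $\sigma$ is strictly convex on $(0, s^*)$ and strictly concave on $(s^*, 1)$; combined with the three boundary values $\sigma(0^+) = \sigma(s^*) = \sigma(1^-) = 0$, strict convexity on the first interval forces $\sigma < 0$ there, and strict concavity on the second forces $\sigma > 0$ there. Establishing this convexity/concavity structure is the technical heart: one writes out $\sigma''(s)$ in terms of $H, H', H''$ evaluated at $s$ and $s^2$ and controls its sign throughout each subinterval. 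A natural route, in the spirit of the Bopanna-style techniques acknowledged by the author, is to decompose $\sigma''$ into entropic and logarithmic pieces and estimate each using refinements of Lemma \ref{easier-entropy-inequality}. The self-consistency of the golden-ratio identity $s^{*2} = 1-s^*$ explains why the inflection of $\sigma$ sits exactly at $s^*$.
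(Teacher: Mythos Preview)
Your reduction is clean and correct: the substitution $s=1-v$, the logarithmic-derivative identity, and the resulting expression $\sigma(s)=2H(s)\log(1-s^2)-H(s^2)\log(1-s)$ whose sign matches that of $g'(s)$ are all valid, as is the verification that $\sigma(s^*)=0$.

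The genuine gap is the claimed convexity/concavity structure of $\sigma$, which you state but do not prove and which is in fact \emph{false}. A short asymptotic computation near $s=0$ gives
\[
\sigma(s)=-s^{3}-s^{4}\log s+\tfrac{3}{2}s^{4}+O(s^{5}\log s),
\]
hence $\sigma''(s)=-6s+O(s^{2}\log s)<0$ for small $s>0$. Thus $\sigma$ is \emph{concave}, not convex, in a right-neighbourhood of $0$, so it cannot be strictly convex on the whole of $(0,s^*)$. The heuristic that ``the golden-ratio identity forces the inflection to sit at $s^*$'' has no basis: that identity pins down the \emph{zero} of $\sigma$, not an inflection. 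Consequently the argument ``convex on $(0,s^*)$ with zero endpoint limits $\Rightarrow \sigma<0$'' collapses, and no amount of decomposing $\sigma''$ will rescue it.

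For contrast, the paper's proof avoids any direct sign analysis of $\sigma$ (or of $g'$). It argues by contradiction: if $g'$ vanished at some $s_0\neq s^*$, then with $\beta=g(s_0)$ the function $H(s^2)-\beta sH(s)$ would have zeros of total order at least six in $[0,1]$ (double at $0$, double at $s_0$, simple at $1$, and at least one more since $s_0$ can be neither a global max nor a global min). Three applications of Rolle's theorem force the \emph{third} derivative to have at least three zeros in $[0,1]$; but that third derivative is $(-4-4s^{2}-\beta(s-2)(1+s)^{2})/(s(1-s^{2})^{2})$, whose numerator is a cubic with leading coefficient $-\beta<0$ and constant term $-4+2\beta<0$ (using $\beta<2$ from Lemma~\ref{easier-entropy-inequality}), hence has a negative real root and at most two roots in $[0,1]$. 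This Rolle-counting trick sidesteps exactly the difficulty you ran into: one never needs global convexity information about a transcendental quantity, only a root count for an explicit cubic.
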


\begin{proof} 
Making the change of variables $s =1-v$, the expression $\frac{ H(2v-v^2)}{ H(v) (1-v)}$ simplifies slightly to $ \frac{ H(s^2)}{ s H(s) }$, which we denote by $F(s)$. We must show $F(s)$ is decreasing for $s< \frac{\sqrt{5}-1}{2} = \phi^{-1} $ and increasing for $s> \phi^{-1}$. Owing to the fact that  $\phi^{-1} $ is a local minimum of  $F(s)$ (by direct calculation or \cite{Bopanna}), it suffices to show the derivative of $F(s)$ is nonvanishing for all $s \in (0,1)  \setminus \{\frac{\sqrt{5}-1}{2}\}$.

Fix an $s_0$ where the derivative of $F(s)$ vanishes, from which we will derive a contradiction, and let $\beta = F(s_0)$. Then $\frac{ H(s^2)}{ s H(s) } - \beta $ vanishes to second order at $s_0$ so $ H(s^2) - \beta s H(s)$ vanishes to second order at $s_0$ as well.

The function $ H(s^2) - \beta s H(s)$ also vanishes to second order at $0$, and to first order at $1$. If those are the only zeroes of $H(s^2) - \beta s H(s)$ in the interval $[0,1]$, then $H(s^2) - \beta s H(s)$ never changes sign on $(0,1)\setminus \{s_0\}$, making $s_0$ either a unique global minimum or a unique global maximum of $F(s)$. But this is impossible as we know $\phi^{-1}$ is a global minimum \cite[Lemma on p. 2]{Bopanna} and the global maximum is not attained since $F(s)<2$ for all $s\in (0,1)$ by Lemma \ref{easier-entropy-inequality}  but $F(s)$ converges to $ 2$ as $s \to 0$ or $s\to 1$ by an easy calculation. 

So $H(s^2) -\beta s H(s)$ must have at least one more zero and thus it has zeroes in $[0,1]$ of total order at least six, so its third derivative has zeroes in $[0,1]$ of total order at least three by Rolle's theorem.

 Differentiating $H(s^2) -  \beta s H(s)$ three times, we obtain
\[ \frac{d^3}{ds^3} H(s^2)  = \frac{d^2}{ ds^2} 2s H'(s^2) = \frac{d}{ds} ( 2  H'(s^2) + 4s^2 H''(s^2) ) = 12 s H''(s^2)+ 8s^3 H'''(s^2) 
\] \[=  \frac{ -12 s  }{ s^2 (1-s^2) }  +  \frac{ 8 s^3 ( 1- 2s^2) }{  (s^2 (1-s^2))^2} =  \frac{ -4  -4 s^2 } {  s (1-s^2)^2} \]
and
\[ \frac{d^3}{ds^3} s H(s) =3 \frac{d^2}{ds^2} H(s) +s \frac{d^3}{ds^3} H(s) =\frac{-3}{s(1-s)}   + s \frac{ 1 - 2s }{ s^2 (1-s)^2}= \frac{ s (1-2s) - 3 s (1-s)} { s^2 (1-s)^2} = \frac{ s-2}{ s(1-s)^2}  .\]
so
\[ \frac{d^3}{ds^3} H(s^2) -\beta s H(s)= \frac{  -4-4s^2 - \beta (s-2) ( 1+s)^2 }{ s  (1-s^2)^2} .\]

The numerator is a polynomial in $s$ of degree $3$ with leading coefficient $-\beta$. Since the leading coefficient is negative, and the numerator takes the value $ -4 + 2 \beta<0$ at $0$ (because $\beta<2$ by Lemma \ref{easier-entropy-inequality}), the numerator must have at least one negative real zero. Thus it has at most two zeroes in $[0,1]$, giving the desired contradiction.

\end{proof}

\begin{proof}[Proof of Proposition \ref{counterexample}] Fix $\overline{u}<u$ such that $d>  \frac{ H ( 2\overline{u} -\overline{u}^2)}{H(\overline{u} )}  $. 

Define a distribution $A$ as follows. First generate a nonnegative-integer-valued random variable $k$ according to the geometric distribution with parameter $\theta$, i.e. the probability of attaining $k$ is $(1-\theta) \theta^k$. Then choose each $i\in [n]$ to lie in $A$ independently, uniformly, with probability $ 1 - (1-\overline{u})^{k+1}$. We choose $\theta$ sufficiently small to ensure various inequalities are satisfied.

Each element lies in $A$ with probability $ \sum_{k=0}^{\infty} (1-\theta)  \theta^k (1 - (1-\overline{u})^{k+1})$. As $\theta\to 0$, this converges to $\overline{u}$, so for $\theta$ sufficiently small this probability is at most $u$.

The entropy of $A$ is at least the average over values of $k$ of the entropy conditional on $k$, which is $\sum_{k=0}^{\infty} (1-\theta) \theta^k  n H(  (1-\overline{u})^{k+1})$. 

For $A, B$ two independent samples from the same distribution, let $k_A$ and $k_B$ be the $k$ values used to sample $A$ and $B$, and let $k' = k_A + k_B+1$. Then $k'$ is a positive integer, the probability of obtaining a given value $k'$ is $(1-\theta)^2 k' \theta^{k'-1}$, and each element $i\in [n]$ lies in $A \cup B$ independently, uniformly, with probability  $ 1 - (1-\overline{u})^{k'+1}$.

The entropy of $A \cup B$ is at most the entropy of the random variable $k'$, which is finite, plus the average over $k'$ of the entropy conditionally on $k'$, which is $\sum_{k'=1}^{\infty} (1-\theta) {k'} \theta^{k'-1}   n H(  (1-\overline{u})^{k'+1})$.  So the ratio of entropies is at most
\[  \frac{O(1) + \sum_{k'=0}^{\infty} (1-\theta) {k'} \theta^{k'-1}   n H(  (1-\overline{u})^{k'+1})}{ \sum_{k=0}^{\infty} (1-\theta) \theta^k  n H(  (1-\overline{u})^{k+1})} = o(1) +   \frac{ \sum_{k'=0}^{\infty} (1-\theta) {k'} \theta^{k'-1}    H(  (1-\overline{u})^{k'+1})}{ \sum_{k=0}^{\infty} (1-\theta) \theta^k  H(  (1-\overline{u})^{k+1})} .\]

As $\theta \to 0$, the numerator and denominator of the fraction converge to  $H ( 2 \overline{u}- \overline{u}^2)$ and $H(\overline{u})$ respectively, so for $\theta$ sufficiently small the fraction is strictly less than $d$, and then for $n$ sufficiently large the ratio of entropies is at most $d$.

Finally, by the convexity of KL divergence, \[D( A \cup B|| A)\leq \sum_{k'=1}^{\infty} (1-\theta) {k'} \theta^{k'-1} D( S_{k'}||A) ,,\] where each $i$ lies in $S_{k'}$  independently, uniformly, with probability  $ 1 - (1-\overline{u})^{k'+1}$.  The distribution of $S_{k'}$  can be obtained by conditioning $A$ on an event with probability $(1-\theta) \theta^{k'}$, so the probability that $S_{k'}=S$ is always at most $ \frac{1}{(1-\theta) \theta^{k'}}$ times the probability that $A=S$, giving \[D(S_{k'}||A) \leq  \log \left( \frac{1}{(1-\theta) \theta^{k'}}\right)  = - k' \log \theta  -\log (1-\theta)\] so that \[ D(A \cup B|| A) \leq \sum_{k'=1}^{\infty} (1-\theta) {k'} \theta^{k'-1} ( - k' \log \theta  -\log (1-\theta) ) = O(1) \] since the sum of a quadratic function against an exponentially decreasing function is bounded.

\end{proof}

\section{Proof Sketch}

We sketch a proof that there exists $\delta>0$ such that,  for $\mathcal F$ a nonempty union-closed family of subsets of $[n]$, there exists $i\in [n]$ contained in a proportion at least $\frac{3- \sqrt{5}}{2}+ \delta$ of the sets in $\mathcal F$. 

To do this, in addition to considering $A, B$ two independent uniform samples from $\mathcal F$, we choose $C$ a uniform sample from $\mathcal F$ that is not necessarily independent from $A$. Since $\mathcal F$ is union-closed, we  have $A \cup B, A \cup C \in \mathcal F$ and thus \[ H(A \cup B) , H(A \cup C) \leq \log \abs{\mathcal F} = H(A). \] We will prove that if each $i \in [n]$ is contained in $A$ with probability  $< \frac{3- \sqrt{5}}{2}+ \delta$  that \[ (1-\alpha) H(A \cup B) + \alpha H(A \cup C) > H(A).\] This will give a contradiction and thus let us conclude some $i$ is contained in $A$ with probability $\geq \frac{3- \sqrt{5}}{2}+ \delta$.

Let $A_i = 1$ if $i \in A$ and $0$ if $i\neq A$, and similarly for $B_i$ and $C_i$.

We describe a random process that, at the $i$th step, determines $A_i$ and $C_i$. Thus at the $i$th step $A_{<i}$ and $C_{<i}$ are fixed. We will choose this in such a way that $A$ and $C$ are uniformly distributed on $\mathcal F$.

Let $p_i$ be the proportion of $\{S \in \mathcal F \mid S_{<i} = C_{<i}\}$ that contain $i$ and let $r_i$ be the proportion of $\{S \in \mathcal F \mid S_{<i} = A_{<i}\}$ that contain $i$. We will choose $A_i$ to be a random variable that is $1$ with probability $p_i$ and $0$ with probability $1-p_i$, and choose $C_i$ to be $1$ with probability $r_i$ and $1$ with probability $1-r_i$. We will choose $A_i$ and $C_i$ to be correlated in a way that maximizes the conditional entropy of $\max (A_i, C_i)$. Specifically, if $p_i\geq 1/2$ or $r_i \geq 1/2$ we generate a uniformly random $x\in [0,1]$ and take $A_i =[ x<p_i]$ and $C_i =[x<r_i]$ so that \[\operatorname{Pr} [ \max(A_i,C_i)=1 |A_{<i}, C_{<i} ]= \max(p_i,r_i).\] and if $p_i, r_i<1/2$ we generate a uniformly random $x \in [0,1]$ and take $A_i = [x<p_i]$ and $C_i = [0 \leq 1/2- x< r_i] $, so that  \[\operatorname{Pr} [ \max(A_i,C_i)=1 |A_{<i}, C_{<i} ]= \min(p_i+r_i,1/2).\]

Since the conditional probability that $i\in A$ is $p_i$, the probability of getting any given sequence is the product of conditional probabilities which matches the probability when $A$ is uniformly distributed in $\mathcal F$. This shows $A$ and $C$ are uniformly distributed in $\mathcal F$.  Letting $q_i = \operatorname{Pr} [ i \in B | B_{<i}]$, we have
\[  (1-\alpha) H((A \cup B)_{<i+1} ) + \alpha H((A \cup C)_{<i+1} ) \] \[= (1-\alpha) H((A \cup B)_{<i} ) + \alpha H((A \cup C)_{<i} )+ (1-\alpha) H((A \cup B)_{<i+1} |  (A \cup B)_{<i}   ) + \alpha H((A \cup C)_{<i+1}  | (A \cup C) _{<i} )\]
\[  \geq (1-\alpha) H((A \cup B)_{<i} ) + \alpha H((A \cup C)_{<i} )+ (1-\alpha) H((A \cup B)_{<i+1} |  A_{<i}, B_{<i}   ) + \alpha H((A \cup C)_{<i+1}  | A_{<i}, C_{<i}  )\]
\[  \geq (1-\alpha) H((A \cup B)_{<i} ) + \alpha H((A \cup C)_{<i} )+ (1-\alpha)  \mathbb E [ H(p_i + q_i - p_i q_i ) ] + \alpha E [  H(\max( p_i, r_i , \min (p_i + r_i,1/2) ) )] .\]

Thus to inductively prove the entropy bound, it suffices to prove that for $p,q,r$ identically distributed $[0,1]$-valued random variables with expectation $\leq \frac{3- \sqrt{5}}{2}+ \delta$ with $p$ and $q$ independent but $p$ and $r$ not necessarily independent, we have
\begin{equation}\label{improved-inequality} (1-\alpha)  \mathbb E [ H(p+ q - p q ) ] + \alpha \mathbb E [  H(\max( p, r , \min (p + r,1/2) ) )] >  \mathbb E[ H(p) ] .\end{equation} 

We can do this by choosing $\alpha$ sufficiently small and $\delta$ sufficiently small depending on $\alpha$.  

The proof of Lemma \ref{inductive-sharp} shows that the only measures $\mu$ on $[0,1]$ with  $ \mathbb E_{(p,q) \sim \mu \times \mu}  [  H( p + q -  pq ) ] \leq  \mathbb E_{p \sim \mu} [ H(p)]$ and expectation $\leq \frac{3- \sqrt{5}}{2}$ are the delta measure at the point $ \frac{3- \sqrt{5}}{2}$ and measures supported on $\{0,1\}$.  So any weakly convergent sequence of measures with ratio $ \mathbb E_{(p,q) \sim \mu \times \mu}  [  H( p + q -  pq ) ] /  \mathbb E_{p \sim \mu} [ H(p)]$ converging to something $\leq 1$ and expectation convergent to something $\leq \frac{3- \sqrt{5}}{2}$ must converge to one of those two. It can't converge to a $\{0,1\}$-supported measure as measures close to that one with low expectation are easily seen to have high entropy ratio, so it must converge to the delta measure at $\{ \frac{3- \sqrt{5}}{2}\}$. It follows that for $\alpha, \delta$ sufficiently small depending on $\epsilon$, any measure with $\mathbb E_{(p,q) \sim \mu \times \mu}  [  H( p + q -  pq ) ] /  \mathbb E_{p \sim \mu} [ H(p)] \leq 1/(1-\alpha)$ and expectation $\leq \mathbb E_{(p,q) \sim \mu \times \mu}  [  H( p + q -  pq ) ] /  \mathbb E_{p \sim \mu} [ H(p)] + \delta$ must be close to the delta measure at $ \frac{3- \sqrt{5}}{2}$ in the sense that $\mathbb E [ \abs{ p - \frac{3- \sqrt{5}}{2} } ] <\epsilon$.

If $p$ and $r$ are both supported on such a measure, then both $p$ and $r$ are usually close to  $\frac{3- \sqrt{5}}{2}$, so, regardless of how $p$ and $r$ are correlated, $\max( p, r , \min (p + r,1/2) ) $ is usually close to $1/2$, and thus $E [  H(\max( p, r , \min (p + r,1/2) ) )] /  \mathbb E[ H(p) ]$ is close to $H(1/2) / H ( \frac{3- \sqrt{5}}{2} ) > 1$.   We can choose $\delta$ close enough to $0$, depending on $\alpha$, that $\mathbb E[ H(p+q-pq)] / \mathbb E[H(p)]$ is sufficiently close to $1$ to ensure that \eqref{improved-inequality} is satisfied.

It would be interesting to modify this argument to obtain an explicit value of $\delta$.

Motivated by this argument, we raise the question:

\begin{question} For any probability measure $\mu$ on subsets of $[n]$, with nonzero entropy, such that $\mu ( \{ A \subseteq [n] \mid i\in A\})<1/2$ for all $i\in [n]$, do there exist random variables $A, B$, identically distributed with measure $\mu$ but not necessarily independent, such that $H( A \cup B) > H(A) $? \end{question}

A positive answer would imply the union-closed conjecture.


\begin{thebibliography}{50}

\bibitem{AHS} Ryan Alweiss, Brice Huang, and Mark Sellke. Improved lower bound for
Frankl’s union-closed sets conjecture. \href{https:///arxiv.org/abs/2211.11731}{arxiv:2211.11731}, 2022.

\bibitem{Bopanna} Ravi B. Bopanna. A Useful Inequality for the Binary Entropy Function. \href{https://arxiv.org/abs/2301.09664}{arxiv:2301.09664}, 2023.

\bibitem{CS} Zachary Chase and Shachar Lovett. Approximate union closed conjecture. \href{https://arxiv/abs/2211.11689}{arxiv:2211.11689}, 2022.

\bibitem{Ellis} David Ellis. Note: a counterexample to a conjecture of Gilmer which would imply the union-closed conjecture. \href{https://arxiv.org/abs/2211.12401}{arxiv:2211.12401}, 2022.

\bibitem{Knill}  Emanuel Knill. Graph generated union-closed families of sets. 
\href{https://arxiv.org/abs/math/9409215}{math/9409215}, 1994

\bibitem{Gilmer}  Justin Gilmer. A constant lower bound for the union-closed sets conjecture. \href{https://arxiv.org/abs/2211.09055}{arxiv:2211.09055}, 2022.

\bibitem{Wojick} Piotr W\'ojcik. Union-closed families of sets. \emph{Discrete Mathematics}, {\bf 199}(1-3):173--182, 1999.


\end{thebibliography}
\end{document}